\title{Short paths in PU(2)}
\author{Zachary Stier}
\email{zstier@berkeley.edu}
\address{UC Berkeley}
\date{December 2020}
\definecolor{codegreen}{rgb}{0,0.6,0}
\definecolor{codegray}{rgb}{0.5,0.5,0.5}
\definecolor{codepurple}{rgb}{0.58,0,0.82}
\definecolor{codeblue}{rgb}{0,0,0.95}
\definecolor{backcolor}{rgb}{0.95,0.95,0.92}
\begin{document}

\maketitle 

\begin{abstract}
Parzanchevski--Sarnak recently adapted an algorithm of Ross--Selinger for factorization of $\PU(2)$-diagonal elements to within distance $\eps$ into an efficient probabilistic algorithm for any $\PU(2)$-element, using at most $3\log_p\frac{1}{\eps^3}$ factors from certain well-chosen sets. The Clifford+$T$ gates are one such set arising from $p=2$. In that setting, we leverage recent work of Carvalho Pinto--Petit to improve this to $\frac{7}{3}\log_2\frac{1}{\eps^3}$, and implement the algorithm in Haskell. 
\end{abstract}

\section{Introduction}
	Factoring in a matrix group given a set of topological generators is a problem of fundamental and practical significance. Because individual gates in quantum circuits acting on $n$ qubits exist as (projective) elements in $U\lpr{\C^{2^n}}$, a question of importance is to fabricate only a specific (finite) set which may then approximate any given gate to arbitrary precision; that this is possible is the Solovay--Kitaev theorem (cf.~Sarnak \cite{Sar15}). A golden gate set \cite{Sar15,PS} is a certain choice of these topological generators for single-qubit gates, and their construction is associated to a particular prime $p$. \cite{PS} shows that most elements of $\PU(2)$ will have a factorization to within $\eps$ (in the bi-invariant metric $d$ on $\PU(2)$) of length up to $k=(1+o(1))\log_p\frac{1}{\eps^3}$,\footnote{Which is optimal due to the set of such gates having size $\approx p^k$, and the ball of radius $\eps$ in $\PU(2)$ having volume (in Haar measure) some constant multiple of $\eps^3$.} but that computing such factorizations for general elements is NP-complete. Instead, \cite[\S2.3]{PS} extends \cite{RS16}'s $(3+o(1))\log_2\frac{1}{\eps^3}$ efficient heuristic factorization algorithm for Clifford+$T$ gates, to any golden gate set. The bulk of this factorization is accomplished by thrice applying an algorithm adapted from \cite{RS16} (initially written for Clifford+$T$ gates, defined below), each time factoring a diagonal into up to length $(1+o(1))\log_p\frac{1}{\eps^3}$ in topological generators. 
	
	\cite{CPP} point out that their factorization for the discrete case of the LPS Ramanujan graphs $X^{p,q}$, attaining factorization length $7\log_pq$, should analogize in the continuous setting with $q\approx\frac{1}{\eps}$, and indeed it does, as we now show for the setting of Clifford+$T$ gates. We understand that Kliuchnikov--Lauter--Minko--Paetznick--Petit have a similar extension underway \cite{KLM+}. 
	
	In $\PU(2)$, let
	\begin{align*}
		H=\frac{i}{\sqrt{2}}\begin{pmatrix}1&\phantom{-}1\\1&-1\end{pmatrix}, && S=\begin{pmatrix}1\\&i\end{pmatrix}, && T=\begin{pmatrix}e^{\frac{i\pi}{8}} \\ & e^{-\frac{i\pi}{8}}\end{pmatrix}.
	\end{align*}
	Associated to the prime $p=2$ is a golden gate set $\cS$ consisting of the Clifford group $C=\<H,S\>$ along with $T$. Define $\G=\<\cS\>=\<C\sqcup\{T\}\>=\<H,T\>$; for this reason we often think of $\cS$ as just $\{H,T\}$. $\G$ is dense in $\PU(2)$, and we are interested in tracking the $T$-count of $\G$-approximants to elements of $\PU(2)$, defined as follows: for $\g\in\G$, factor $\g=\prod\limits_{i=1}^ns_i$ where $s_i\in\cS$; the $T$-count is the minimum number of $s_i$ equalling $T$ among all factorizations $\{s_i\}$ of $\g$. The $T$-count is of interest because the Clifford+$T$ gates are commonly used for modern quantum circuitry, and elements of $C$ are assumed to be easy to fabricate, while $T$ is much more costly, hence why we ignore the number of $C$-elements required. 
	
	Each element of $\G$ has entries in the standard quaternion realization \cite{Sar15} in the ring of integers $\cO=\Z[\sqrt{2}]$, a unique factorization domain, with fraction field $K=\Q[\sqrt{2}]$. We assume that there exists a $O(\poly\log\tN_{K/\Q}(n))$-time algorithm to factor $n\in\cO$ into primes. 
	\begin{theorem-non}[{\cite{RS16}}]
		There is a $O(\poly\log\frac{1}{\eps})$-time algorithm to factor almost any element of $\PU(2)$ using the gates $\cS$ to within $\eps$ in $d$ using $T$-count at most $(3+o(1))\log_2\frac{1}{\eps^3}$. 
	\end{theorem-non}
	(This result, accomplished with Euler angles, generalizes to when $\cS$ is any golden gate set \cite[\S2.3]{PS}.) We shall leverage the two-dimensional lattice structure of $\cO$ to apply Lenstra's algorithm \cite{Len,Paz}. The main result is the following: 
	\begin{theorem}\label{thm:main}
		There is a $O\lpr{\poly\log\frac{1}{\eps}}$-time algorithm to factor almost any element of $\PU(2)$ using the gates $\cS$ to within $\eps$ in $d$ using $T$-count at most $\lpr{\frac{7}{3}+o(1)}\log_2\frac{1}{\eps^3}$. 
	\end{theorem}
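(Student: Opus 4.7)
The plan is to lift the factorization scheme of \cite{CPP} for the LPS graphs $X^{2,q}$ into the continuous $\PU(2)$-setting by matching parameters $q \leftrightarrow 1/\eps$, so that their length bound $7\log_2 q$ becomes a $T$-count bound $\frac{7}{3}\log_2\frac{1}{\eps^3}$. The outer algorithmic shell follows \cite[\S2.3]{PS}: reduce the approximation of a generic $U \in \PU(2)$ at accuracy $\eps$ to finding an integral quaternion $\alpha$ of norm $2^k$, with entries in $\cO = \Z[\sqrt{2}]$, such that the corresponding gate lies within $\eps$ of $U$ in $d$. The $T$-count of the resulting $\cS$-factorization is $(1+o(1))k$, so reducing $k$ from the $(3+o(1))\log_2(1/\eps)$ afforded by \cite{RS16} down to $(\frac{7}{3}+o(1))\log_2(1/\eps)$ will deliver the theorem.

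For the inner search I would emulate \cite{CPP}: rather than solving three decoupled diagonal subproblems (each contributing a full $\log_2(1/\eps)$ to the denominator exponent, and hence to the $T$-count), search directly for $\alpha$ by enumerating candidate norms and, at each norm, using lattice machinery on $\cO$ to locate an admissible numerator within the $\eps$-ball about $U$. The crucial step is Lenstra's algorithm \cite{Len,Paz} applied to the rank-$2$ $\Z$-lattice underlying $\cO$, which, as flagged in the excerpt, carries out the relevant short-vector search in poly-log time and thereby controls the denominator exponent to $(\frac{7}{3}+o(1))\log_2(1/\eps)$. Combined with the standing poly-log factoring assumption in $\cO$, each candidate norm is tested in $O(\poly\log(1/\eps))$ time, and the number of candidate norms to scan is also poly-log, preserving the overall $O(\poly\log\frac{1}{\eps})$ budget.

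The main obstacle I anticipate is the honest transfer of the $\frac{7}{9}$-savings constant from the discrete LPS setting of \cite{CPP}, where it arises from a sharp count of integral quaternions of norm $2^k$ in a $q$-adic residue class, into the archimedean setting, where we instead need a count of such quaternions whose image is $\eps$-close in the bi-invariant metric on $\PU(2)$. The two counts line up heuristically by the footnote in the excerpt (volume of an $\eps$-ball is $\Theta(\eps^3)$ in Haar measure, matching the $q^3$ scale of vertices in $X^{2,q}$), but making this rigorous will require auditing \cite{CPP}'s strong-approximation input and replacing its $q$-adic reduction by an archimedean approximation argument; this is exactly where the two-dimensional lattice structure of $\cO$ and Lenstra's algorithm must be brought to bear. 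A secondary bookkeeping obstacle is preserving \cite{PS}'s ``almost any'' qualifier by exhibiting the Haar-null exceptional set on which the randomized inner search might fail, which I expect to carry over essentially verbatim from \cite[\S2.3]{PS}.
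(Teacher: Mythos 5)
There is a genuine gap: you have discarded the very decomposition that produces the constant $\frac{7}{3}$, and the monolithic search you propose in its place is not a workable substitute. The paper (like \cite{CPP} in the discrete setting) does \emph{not} search directly for a single quaternion of norm $2^k$ whose image lies in the $\eps$-ball about the target. It keeps the Euler-angle shape of \cite{PS,RS16}, writing $g\approx\g_1\g\g_2$, where $\g_1,\g_2$ are Ross--Selinger approximations of two well-chosen diagonals, each still costing the full $(1+o(1))\log_2\frac{1}{\eps^3}$ in $T$-count, and only the \emph{middle diagonal} is replaced by an element $\g\in\G$ found by lattice search. The point is that $\g$ need only satisfy a single real condition---that $\abs{x_0+x_1i}/2^{\half k}$ match $\abs{\ga}$ to within $\eps$---so the search succeeds already at $T$-count about $\frac13\log_2\frac{1}{\eps^3}$; \lemref{lem:close approx} then supplies two rotations $\gd_1,\gd_2$ fixing the phases and controls the error in $d$, and the bookkeeping $1+1+\frac13=\frac73$ gives the theorem. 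Your plan (``locate an admissible numerator within the $\eps$-ball about $U$'' at each candidate norm) imposes three real conditions, which forces $2^k\gtrsim\eps^{-3}$ and is essentially the optimal-factorization problem that \cite{PS} show is NP-complete; and once the two diagonal subproblems are removed, nothing in your argument generates the constant $\frac73$ at all---it is precisely the sum of two full-price diagonal factorizations and one cheap middle factor, both here and in \cite{CPP}.

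Relatedly, Lenstra's algorithm cannot carry the load you assign it. ``Norm $2^k$ and image within $\eps$ of $U$'' is a quadratic Diophantine condition on the four coordinates $x_0,\dots,x_3\in\cO$, not an integer \emph{linear} program in fixed dimension. The paper invokes Lenstra only in dimension two, to enumerate candidates $m=x_0^2+x_1^2\in\cO$ subject to the convex constraints $\abs{m-\abs{\ga}^22^k}<\eps\abs{\ga}2^k$ and $\abs{\gs_\pm m}\le2^k$, and then relies on the heuristic density of sums of two squares (in the spirit of \cite{Sar}) to split $m$ and $2^k-m$, with exact synthesis of $\g$ by \cite{KMM} and the diagonals handled by \cite{RS16}. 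Consequently your anticipated obstacle---rigorously transferring \cite{CPP}'s strong-approximation count to the archimedean setting---is aimed at the wrong place: the transfer is effected by the elementary \lemref{lem:close approx} (matching absolute values of the $(1,1)$ entries, plus two free rotations, implies closeness in $d$), together with the same heuristic halting analysis ($\G$-elements of bounded $T$-count equidistributing at the $\eps^3$ volume scale) that already underlies \cite{RS16,PS}; no re-audit of the $q$-adic argument is needed, but the two-diagonals-plus-cheap-middle structure is.
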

	The heart of this improvement lies with replacing the middle diagonal factor of a typical element of $\PU(2)$ with a well-chosen element of $\G$, selected to have particularly small $T$-count. 
	
	In \secref{sec:nearby}, we present some technical lemmas that enable this factorization. In \secref{sec:continuous alg}, we describe the algorithm which accomplishes \thmref{thm:main}, analyzing it is \secref{sec:analysis}. In \secref{sec:implementation}, we give some information about the Haskell implementation. 
	
\section{Nearby elements in $\PU(2)$}\label{sec:nearby}
	The contents of this section are entirely independent of the choice of topological generators. Instead, the goal is \lemref{lem:close approx} by which we establish sufficient conditions for two elements of $\PU(2)$ to be nearby with respect to the following metric. 
	\begin{definition-non}[$\PU(2)$'s bi-invariant metric, cf.~\cite{Sar15,PS}]
		$\PU(2)$ has the bi-invariant metric 
		$$d(x,y)=1-\half\abs{\tr x^*y}.$$
	\end{definition-non}
	Note that this is also the bi-invariant metric on $\SU(2)$, and that $\PU(2)\cong\PSU(2)$, so quotienting by (norm-one) scalars does not change this metric due to the presence of the absolute value. Therefore, for concreteness we shall work work elements of $\SU(2)$. 
	
	We introduce the following convenient notation for elements of the group. 
	\begin{notation-non}
		Suppose $\ga,\gb\in\C$ satisfy $\abs{\ga}^2+\abs{\gb}^2=1$. Then we let $u(\ga,\gb)$ denote the canonical element corresponding to $\ga$ and $\gb$, i.e.
		$$u(\ga,\gb)=\begin{pmatrix}\ga & \gb \\ -\ol{\gb} & \ol{\ga}\end{pmatrix}.$$
		We use the additional shorthand of $u(\gt)$ to denote the canonical diagonal element corresponding to rotation by $\gt\in\R$, i.e.
		$$u(\gt)=u(e^{i\gt},0)=\begin{pmatrix}e^{i\gt} & 0 \\ 0 & e^{-i\gt}\end{pmatrix}.$$
	\end{notation-non}
	Now, we are equipped to begin discussing the elements of $\SU(2)$, beginning with a fact about real numbers, representing the norms of the complex numbers comprising the matrix. There is a catch, namely that these results only hold for matrices outside of a small neighborhood of the identity matrix parameterized by $\eps_0$; however, $\eps_0$ can be made as small as one likes, with the only cost being (approximately inversely) to multiples of $\eps$ obtained in subsequent bounds. Therefore it is ideal to have $\eps\ll\eps_0$. 
	\begin{lemma}\label{lem:close norms}
		Select absolute constants $\tilde{\eps},\eps_0>0$. Suppose $a_1,a_2,b_1,b_2\ge0$ satisfy the following conditions, for some $\eps<\tilde{\eps}$: 
		\begin{align*}
			a_1^2+b_1^2=1, && a_2^2+b_2^2=1, && \abs{a_1-a_2}<\eps,
		\end{align*}
		and either $a_1<\sqrt{1-\eps_0^2}$ or $a_2<\sqrt{1-\eps_0^2}$. Then there exists $c=c(\tilde{\eps},\eps_0)$ for which $\abs{b_1-b_2}<c\eps$. 
	\end{lemma}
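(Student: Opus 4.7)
The plan is to reduce the bound on $|b_1 - b_2|$ to the hypothesized bound on $|a_1 - a_2|$ via the usual ``multiply by the conjugate'' trick. Since all four quantities are nonnegative with $a_i^2 + b_i^2 = 1$, we have $b_i = \sqrt{1 - a_i^2}$, so
\[
 |b_1 - b_2| \;=\; \bigl|\sqrt{1-a_1^2} - \sqrt{1-a_2^2}\bigr| \;=\; \frac{|a_1^2 - a_2^2|}{b_1 + b_2} \;=\; \frac{|a_1 - a_2|(a_1 + a_2)}{b_1 + b_2},
\]
provided $b_1 + b_2 > 0$. The numerator is bounded by $2\eps$ since $a_1, a_2 \in [0,1]$ and $|a_1 - a_2| < \eps$.

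The only real obstacle is to bound the denominator $b_1 + b_2$ away from zero, and this is precisely where the hypothesis on $\eps_0$ enters. Without loss of generality, suppose $a_1 < \sqrt{1 - \eps_0^2}$; then $b_1 = \sqrt{1 - a_1^2} > \eps_0$, and since $b_2 \geq 0$ we have $b_1 + b_2 \geq \eps_0$. Plugging this in yields
\[
 |b_1 - b_2| \;<\; \frac{2\eps}{\eps_0},
\]
so we may take $c = 2/\eps_0$.

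The constant ends up depending only on $\eps_0$ and not on $\tilde\eps$ in this argument; the role of $\tilde\eps$ is presumably to keep $c\eps$ small enough for the downstream application in \secref{sec:continuous alg}. I would note in passing that the ``either/or'' hypothesis cannot be weakened to a single-sided condition, since if $a_1 = 1, a_2 = 1 - \eps$ were allowed we would have $b_1 = 0, b_2 = \sqrt{2\eps - \eps^2} \approx \sqrt{2\eps}$, which is not $O(\eps)$; the lemma avoids this degenerate regime by requiring at least one of the $a_i$ to be bounded away from $1$.
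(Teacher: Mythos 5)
Your proof is correct and follows essentially the same route as the paper: both hinge on the identity $\abs{b_1-b_2}=\frac{\abs{a_1^2-a_2^2}}{b_1+b_2}$ (the paper writes $a_2=a_1+\delta$, so its numerator $\abs{\delta}\abs{2a_1+\delta}$ is the same quantity), with the denominator bounded below by $\eps_0$ via the either/or hypothesis. Your bookkeeping even yields a slightly cleaner constant $c=2/\eps_0$ than the paper's $c=\frac{2+\tilde{\eps}}{\eps_0}$, which confirms your remark that the dependence on $\tilde{\eps}$ is an artifact of the paper's slack in bounding the numerator rather than a necessity.
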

	A pictorial view of this result is as follows. Consider points in the upper half-plane and on the unit circle, at least some fixed distance above the $x$-axis. Then, if they have nearby $x$-coordinates, it follows that they have comparably nearby $y$-coordinates. 
	\begin{proof}
		For convenience, write $a_2=a_1+\gd$ for $\abs{\gd}<\eps$. Then 
		\begin{align*}
			b_2^2&=1-a_2^2\\
				&=1-a_1^2-2\gd a_1-\gd^2\\
				&=b_1^2-\gd(2a_1+\gd)\\
			\abs{b_1-b_2}&=\frac{\abs{\gd}\abs{2a_1+\gd}}{b_1+b_2}\\
				&<\frac{2+\tilde{\eps}}{\eps_0}\eps
		\end{align*}
		where $c(\tilde{\eps},\eps_0)=\frac{2+\tilde{\eps}}{\eps_0}$. 
	\end{proof}
	We leverage this result and apply it to the absolute values of entries of elements of $\SU(2)$, to show the following about such matrices having nearby entrywise absolute values and arguments. 
	
	\begin{lemma}\label{lem:close metric}
		Select absolute constants $\tilde{\eps},\eps_0>0$. Suppose $\ga_1,\ga_2,\gb_1,\gb_2\in\C$ satisfy the following conditions, for some $\eps<\tilde{\eps}$: 
		\begin{align*}
			\arg\ga_1&=\arg\ga_2, &&& \abs{\ga_1}^2+\abs{\gb_1}^2&=1,\\
			\arg\gb_1&=\arg\gb_2, &&& \abs{\ga_2}^2+\abs{\gb_2}^2&=1,\\
			&&\abs{\abs{\ga_1}-\abs{\ga_2}}&<\eps, 
		\end{align*}
		and either $\abs{\ga_1}<\sqrt{1-\eps_0^2}$ or $\abs{\ga_2}<\sqrt{1-\eps_0^2}$. Write $\g_1=u(\ga_1,\gb_1)$ and $\g_2=u(\ga_2,\gb_2)$. Then there exists $\tilde{c}=\tilde{c}(\tilde{\eps},\eps_0)$ for which $d(\g_1,\g_2)<\tilde{c}\eps$. 
	\end{lemma}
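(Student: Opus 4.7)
My plan is to compute $d(\g_1,\g_2)$ directly from the definition and reduce it to a bound that follows from the previous lemma.

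First, I would write $\g_1^*\g_2$ explicitly and take the trace. A short calculation gives
\begin{align*}
	\tr(\g_1^*\g_2) = 2\Re(\ol{\ga_1}\ga_2) + 2\Re(\ol{\gb_1}\gb_2).
\end{align*}
Here the argument-matching hypotheses are essential: $\arg\ga_1=\arg\ga_2$ forces $\ol{\ga_1}\ga_2 = \abs{\ga_1}\abs{\ga_2}$ (a nonnegative real), and likewise $\ol{\gb_1}\gb_2 = \abs{\gb_1}\abs{\gb_2}$. Setting $a_i=\abs{\ga_i}$, $b_i=\abs{\gb_i}$, this reduces the metric to
\begin{align*}
	d(\g_1,\g_2) = 1 - a_1a_2 - b_1b_2,
\end{align*}
where the absolute value in the metric's definition can be dropped because Cauchy--Schwarz against $(a_1,b_1)$ and $(a_2,b_2)$ yields $a_1a_2+b_1b_2\le 1$.

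The second step is the key trick: use the identity $1=a_1^2+b_1^2$ to rewrite
\begin{align*}
	1 - a_1a_2 - b_1b_2 = a_1(a_1-a_2) + b_1(b_1-b_2),
\end{align*}
so that the triangle inequality, together with $a_1,b_1\le 1$, gives $d(\g_1,\g_2) < \abs{a_1-a_2} + \abs{b_1-b_2}$. By hypothesis $\abs{a_1-a_2}<\eps$, and \lemref{lem:close norms} (applied with the nonnegative reals $a_i,b_i$ and the same $\tilde{\eps},\eps_0$) bounds $\abs{b_1-b_2} < c\eps$. Therefore $\tilde{c}(\tilde{\eps},\eps_0)=1+c(\tilde{\eps},\eps_0) = 1+\frac{2+\tilde{\eps}}{\eps_0}$ suffices.

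There is no real obstacle here: the lemma is essentially a bookkeeping exercise combining \lemref{lem:close norms} with the formula for $d$. The only subtle point is recognizing that the argument-matching conditions are precisely what collapses the trace into a linear expression in the moduli, so that the problem becomes one-dimensional and reduces to the previous lemma.
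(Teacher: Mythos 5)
Your proof is correct and follows essentially the same route as the paper: compute $\tr(\g_1^*\g_2)$, use the argument-matching hypotheses to reduce it to $2(\abs{\ga_1}\abs{\ga_2}+\abs{\gb_1}\abs{\gb_2})$, invoke \lemref{lem:close norms} for $\abs{\abs{\gb_1}-\abs{\gb_2}}<c\eps$, and bound the deficit from $2$ linearly in $\eps$. The only difference is cosmetic: the paper sharpens the last step with $\abs{\ga_1}+c\abs{\gb_1}\le\sqrt{1+c^2}$ to get $\tilde{c}=\sqrt{1+c^2}$, whereas your cruder bound $\abs{\ga_1},\abs{\gb_1}\le1$ gives $\tilde{c}=1+c$, which is equally sufficient for the statement.
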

	\begin{proof}
		We first fix $c=\frac{2+\tilde{\eps}}{\eps_0}$ and immediately we apply \lemref{lem:close norms} to find $\abs{\abs{\gb_1}-\abs{\gb_2}}<c\eps$. We have that 
		$$\g_1^*\g_2=\begin{pmatrix}\ol{\ga_1}\ga_2+\gb_1\ol{\gb_2} & \star \\ \star & \ga_1\ol{\ga_2}+\ol{\gb_1}\gb_2\end{pmatrix}$$
		where the $\star$'s indicate irrelevant quantities. Then, 
		\begin{align*}
			\tr\g_1^*\g_2&=\ol{\ga_1}\ga_2+\ga_1\ol{\ga_2}+\gb_1\ol{\gb_2}+\ol{\gb_1}\gb_2\\
				&=2\abs{\ga_1}\abs{\ga_2}+2\abs{\gb_1}\abs{\gb_2}\\
				&>2\abs{\ga_1}\lpr{\abs{\ga_1}-\eps}+2\abs{\gb_1}\lpr{\abs{\gb_1}-c\eps}\\
				&=2-2\lpr{\abs{\ga_1}+c\abs{\gb_1}}\eps.
		\end{align*}
		Elementarily,\footnote{e.g.~using Lagrange multipliers.} $\abs{\ga_1}+c\abs{\gb_1}\le\sqrt{1+c^2}$ for all choices of $\ga_1,\gb_1$ satisfying $\abs{\ga_1}^2+\abs{\gb_1}^2=1$. Therefore, $\tr\g_1^*\g_2>2-2\sqrt{1+c^2}\eps$. We conclude with
		$$d(\g_1,\g_2)=1-\half\abs{\tr\g_1^*\g_2}<\sqrt{1+c^2}\eps$$
		where $\tilde{c}(\tilde{\eps},\eps_0)=\sqrt{1+c^2}$. 
	\end{proof}
	Finally, we use this guarantee about the bi-invariant metric to explicitly describe rotations which send one matrix to have close entries in absolute value to another while also ensuring closeness in the metric. 
	\begin{lemma}\label{lem:close approx}
		Select absolute constants $\tilde{\eps},\eps_0>0$. Take $\g_1,\g_2\in \SU(2)$ and write them as $\g_1=u(\ga_1,\gb_1)$ and $\g_2=u(\ga_2,\gb_2)$. If $\abs{\abs{\ga_1}-\abs{\ga_2}}<\eps$ for some $\eps<\tilde{\eps}$ and either $\abs{\ga_1}<\sqrt{1-\eps_0^2}$ or $\abs{\ga_2}<\sqrt{1-\eps_0^2}$ then there exist $\gt_1,\gt_2\in\R$ and $\tilde{c}=\tilde{c}(\tilde{\eps},\eps_0)$\footnote{$\tilde{c}$ as in \lemref{lem:close metric}.} for which, writing $\gd_1=u(\gt_1)$ and $\gd_2=u(\gt_2)$, we have 
		$$d(\g_1,\gd_1\g_2\gd_2)<\tilde{c}\eps.$$
	\end{lemma}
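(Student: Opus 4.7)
The approach is to reduce this directly to \lemref{lem:close metric} by choosing $\gt_1,\gt_2$ so that $\gd_1\g_2\gd_2$ is of the right form to compare with $\g_1$ under that lemma.

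First, I would carry out the direct computation of $\gd_1\g_2\gd_2$ using the explicit formulas for $u(\gt_i)$ and $u(\ga_2,\gb_2)$. Both multiplications act only by phase factors on the entries, and the result is
$$\gd_1\g_2\gd_2 = u\!\left(e^{i(\gt_1+\gt_2)}\ga_2,\; e^{i(\gt_1-\gt_2)}\gb_2\right).$$
Thus conjugation by these diagonals preserves $|\ga_2|$ and $|\gb_2|$ and lets us freely adjust $\arg\ga_2$ and $\arg\gb_2$ independently via the invertible change of variables $(\gt_1,\gt_2)\mapsto(\gt_1+\gt_2,\gt_1-\gt_2)$.

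Next, assuming the generic case $\ga_2,\gb_2\ne0$, I would solve the linear system
\begin{align*}
\gt_1+\gt_2&\equiv\arg\ga_1-\arg\ga_2\pmod{2\pi},\\
\gt_1-\gt_2&\equiv\arg\gb_1-\arg\gb_2\pmod{2\pi},
\end{align*}
which has the unique solution
$$\gt_1=\tfrac{1}{2}\bigl((\arg\ga_1-\arg\ga_2)+(\arg\gb_1-\arg\gb_2)\bigr),\qquad \gt_2=\tfrac{1}{2}\bigl((\arg\ga_1-\arg\ga_2)-(\arg\gb_1-\arg\gb_2)\bigr).$$
With these choices, writing $\g_2':=\gd_1\g_2\gd_2=u(\ga_2',\gb_2')$, we have $\arg\ga_2'=\arg\ga_1$, $\arg\gb_2'=\arg\gb_1$, $|\ga_2'|=|\ga_2|$, $|\gb_2'|=|\gb_2|$, so in particular $||\ga_1|-|\ga_2'||<\eps$ and the hypothesis $|\ga_1|<\sqrt{1-\eps_0^2}$ or $|\ga_2'|<\sqrt{1-\eps_0^2}$ is inherited from the hypothesis on $\g_1,\g_2$. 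Applying \lemref{lem:close metric} to $\g_1$ and $\g_2'$ yields $d(\g_1,\gd_1\g_2\gd_2)<\tilde{c}\eps$ with the same $\tilde{c}(\tilde{\eps},\eps_0)$.

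I do not expect a serious obstacle: the only nuisance is handling the degenerate cases where $\ga_2=0$ or $\gb_2=0$ (and similarly for $\g_1$), in which an argument is undefined. But those are easy: if $\gb_2=0$ then $|\ga_2|=1$, forcing $|\ga_1|<\sqrt{1-\eps_0^2}$ and hence (after enforcing $\tilde{\eps}<1-\sqrt{1-\eps_0^2}$, which only refines the absolute constant) giving a contradiction to $||\ga_1|-|\ga_2||<\eps$; so with $\tilde{\eps}$ chosen small the hypotheses rule these cases out, and only the generic case needs handling.
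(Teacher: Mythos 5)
Your proposal is correct and follows essentially the same route as the paper: solve the linear system $\gt_1+\gt_2=\arg\ga_1-\arg\ga_2$, $\gt_1-\gt_2=\arg\gb_1-\arg\gb_2$, observe that $\gd_1\g_2\gd_2$ then has the moduli of $\g_2$ and the arguments of $\g_1$, and invoke \lemref{lem:close metric}. Your extra remark on the degenerate cases $\ga_2=0$ or $\gb_2=0$ is a harmless addition the paper omits (one can also just fix a convention for $\arg 0$, since those entries contribute nothing to the trace bound).
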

	\begin{proof}
		First, let
		\begin{align*}
			\gt_1+\gt_2&=\arg\ga_1-\arg\ga_2 \\
			\gt_1-\gt_2&=\arg\gb_1-\arg\gb_2
		\end{align*}
		which reduces to
		\begin{align*}
			\gt_1&=\half\lpr{\arg\ga_1-\arg\ga_2+\arg\gb_1-\arg\gb_2} \\
			\gt_2&=\half\lpr{\arg\ga_1-\arg\ga_2-\arg\gb_1+\arg\gb_2}.
		\end{align*}
		We then multiply out $\gd_1\g_2\gd_2$ as\footnote{The bars represent complex conjugates, not fractions.}
		$$\gd_1\g_2\gd_2=\begin{pmatrix}e^{i(\gt_1+\gt_2)}\ga_2 & e^{i(\gt_1-\gt_2)}\gb_2 \\ -\ol{e^{i(\gt_1-\gt_2)}\gb_2} & \ol{e^{i(\gt_1+\gt_2)}\ga_2}\end{pmatrix}=\begin{pmatrix}\ga_3 & \gb_3 \\ -\ol{\gb_3} & \ol{\ga_3}\end{pmatrix}\in \PU(2)$$
		where $\abs{\ga_3}=\abs{\ga_2}$, $\abs{\gb_3}=\abs{\gb_2}$, $\arg\ga_3=\arg\ga_1$, and $\arg\gb_3=\arg\gb_1$. Therefore we apply \lemref{lem:close metric} and immediately conclude the result. 
	\end{proof}
	
\section{Algorithm for short paths}\label{sec:continuous alg}
	Select absolute constants $\tilde{\eps},\eps_0>0$ where $\tilde{\eps}<\half$. Take any $g=u(\ga,\gb)\in \PU(2)$ where $\abs{\ga}<\sqrt{1-\eps_0^2}$, and pick $\eps<\tilde{\eps}$.\footnote{By choice of sufficiently small $\eps_0$, we exclude at this stage an arbitrarily small ball in Haar measure about the identity matrix.} We wish to approximate $g$ using $\g\in\G$ of the form
	\begin{equation}
		\g=\frac{1}{2^{\half k}}\begin{pmatrix}x_0+x_1i & x_2+x_3i \\ -x_2+x_3i & x_0-x_1i\end{pmatrix}\label{eq:gamma def}
	\end{equation}
	having $k$, the factorization length, minimized, and so we begin with $k=0$. (We also have $x_0,x_1,x_2,x_3\in\cO$.) In particular, the objective is to approximate $g$ as $\g_1\g\g_2$ where $\g_1,\g_2\in\G$ approximate well-chosen diagonals, and $\g\in\G$ has factorization computable by Kliuchnikov--Maslov--Mosca \cite{KMM}. (We will compute $\g_1$ and $\g_2$ using \cite{RS16}.) We will see that $\g$ is designed to have factorization typically shorter than that of $\g_1$ and $\g_2$, giving rise to the desired improvement. 
	
	In order to apply \lemref{lem:close approx} we need to have $\abs{\frac{x_0+x_1i}{2^{\half k}}}=\sqrt{\frac{x_0^2+x_1^2}{2^k}}$ near $\abs{\ga}$ (that is, within $\eps$). Because $\abs{\frac{x_0+x_1i}{2^{\half k}}}+\abs{\ga}\ge\abs{\ga}$ which is fixed, it suffices to first find candidate values for $x_0,x_1\in\cO$ with $\abs{\abs{\frac{x_0+x_1i}{2^{\half k}}}^2-\abs{\ga}^2}<\eps\abs{\ga}$, rewritten to
	\begin{equation}
		\abs{x_0^2+x_1^2-\abs{\ga}^22^k}<\eps\abs{\ga}2^k.\label{eq:first xs}
	\end{equation}
	Viewing $\g$ as an element of $\SU(2)$, we also have $\det\g=1$, i.e.~$x_0^2+x_1^2+x_2^2+x_3^2=2^k$. $[K:\Q]=2$, so we explicitly work with the Galois group elements
	\begin{align*}
		\gs_+:\hspace{0.18cm}1&\longmapsto1\\
		\sqrt{2}&\longmapsto\sqrt{2},\\
		\gs_-:\hspace{0.18cm}1&\longmapsto1\\
		\sqrt{2}&\longmapsto-\sqrt{2},
	\end{align*}
	both of which are real embeddings, so that as $x_i\in\cO\subset K\subset\R$, it follows that $\gs_\pm(x_0^2+x_1^2)+\gs_\pm(x_2^2+x_3^2)=\gs_\pm 2^k=2^k$, and so
	\begin{equation}
		\gs_\pm(x_0^2+x_1^2)\le 2^k.\label{eq:second xs}
	\end{equation}
	Now, let $m=x_0^2+x_1^2\in\cO$. Considering $\cO$ as an integer lattice, we adapt \eqref{eq:first xs} and \eqref{eq:second xs} and seek to solve
	\begin{align*}
		\abs{m-\abs{\ga}^22^k}&<\eps\abs{\ga}2^k\\
		\abs{\gs_\pm m}&\le 2^k
	\end{align*}
	which are convex constraints on $m$ when written in its lattice components. Since this is an integer programming problem in two dimensions, we apply Lenstra's algorithm to efficiently list all such lattice points $m$. For each $m$, using efficient factorization in $\cO$, we attempt to write $m$ as a sum of two squares; if possible, say $m=x_0^2+x_1^2$, and so we attempt to write $\nye{m}=2^k-m$ as a sum of two squares. If possible, say $\nye{m}=x_2^2+x_3^2$, so we simply halt and return $\g$ corresponding to \eqref{eq:gamma def}. However, if $\nye{m}$ may not be represented as a sum of two squares, we simply move on to the next value of $m$ and try this process again. If this fails for all $m$ arising from $k$, we increment $k$ and run Lenstra's algorithm for the new inequalities. 
	
	Supposing we have halted and constructed $\g$, we compute $\gd_1$ and $\gd_2$ guaranteed by \lemref{lem:close approx}. These are efficiently approximable by \cite[Algorithm 7.3]{RS16} to $\g_1$ and $\g_2$, respectively. Chaining together the three approximations $\g_1\g\g_2$ gives the final desired approximation to $g$. 
	
	\begin{remark-non}
		We assume to begin that $g$ is far from a diagonal, i.e.~that $\abs{\ga}<\sqrt{1-\eps_0^2}$. However, we also note that if $\g=u(\arg\ga)$, then $d(\g,g)=1-\abs{\ga}=\frac{\eps_0^2}{2}+O(\eps_0^4)$ (by considering the Taylor series of $\sqrt{1-x^2}$). Therefore, if we have the additional assumption that $\eps_0^2\approx\eps$ (which is sensible, since if $\eps$ is ``small'' then $\sqrt{\eps}$ will be ``large'', relatively speaking, and this is not incompatible with $\eps\ll\eps_0$), we get a bifurcated approach for {\em any} $g=u(\ga,\gb)\in\PU(2)$! Namely, in the case that $\abs{\ga}<\sqrt{1-\eps_0^2}$ we run our algorithm as specified above, and otherwise we give $u(\arg\ga)$ to \cite{RS16}'s algorithm. 
	\end{remark-non}
	
\section{Analysis of the algorithm}\label{sec:analysis}

	We begin the analysis by establishing the $T$-count and tightness of the approximation. In particular, $d(\g_1,\gd_1)<\eps$ and $d(\g_2,\gd_2)<\eps$ with factorization lengths each $(1+o(1))\log_2\frac{1}{\eps^3}$. By \lemref{lem:close approx}, $d(g,\gd_1\g\gd_2)<\nye{c}\eps$. Therefore, $d(g,\g_1\g\g_2)<(\tilde{c}+2)\eps$ (by the triangle inequality) and since $\g$ has a factorization of $T$-count approximately $\frac{1}{3}\log_2\frac{1}{\eps^3}$, this constitutes a factorization of an element in $g$'s neighborhood of $T$-count $\frac{7}{3}\log_2\frac{1}{\eps^3}$. 
	
	The efficiency of this algorithm---that is, that it runs in time $O\lpr{\poly\log\frac{1}{\eps}}$---is because we expect to halt when $2^k\eps^3\in O(1)$ (so only $k\approx\frac{1}{3}\log_2\frac{1}{\eps^3}$ calls are expected), and only call polynomially-many polynomial-time subroutines. The dominant subroutines are calls to Lenstra's algorithm, which as shown in \cite{Len} which runs in time polynomial in the size of the constraints for any fixed dimension $n$. Indeed, here we have only $m=6$ linear constraints (two per absolute value), and the largest value $a$ in the constraints is $p^k$, so the runtime is polynomial in $nm\log a\in\gT(k)$. 
	
	The reason we expect to halt when $2^k\eps^3\in O(1)$ is that $\G$ is viewed as an infinite 3-regular tree with each edge taking one further from the identity matrix's vertex represents an increase in the weight. The number of vertices of $T$-count exactly $k$ is $3\cdot2^{k-1}$, so there are $\gT(2^k)$ elements of $T$-count up to $k$. Expecting them to cover $\PU(2)$ (of total volume 1) well for large $k$, with a ball of volume some constant multiple $c$ of $\eps^3$, we therefore have $c2^k\eps^3\approx1$, i.e.~$k\approx\frac{1}{3}\log_2\frac{1}{\eps^3}$. As $T$-count is increasing in the quantity $k$ as in \eqref{eq:gamma def}, we expect to only have about $\frac{1}{3}\log_2\frac{1}{\eps^3}$-many iterations to achieve $T$-count $\frac{1}{3}\log_2\frac{1}{\eps^3}$. 
	
	When attempting to write elements of $\cO$ as a sum of two squares, we primarily rest on a belief, in the style of Cram\'{e}r's conjecture and a conjecture of Sardari \cite[($*$)]{Sar}, that sums of squares are dense in $\N$. Seeking to analogize \cite[($*$)]{Sar} in particular, we note that the operative aspect is that a dense cluster of lattice points will represent a sum of two squares, and that a point accomplishing this will be found quickly through Lenstra's algorithm. 
	
	The significance of this result is to accomplish a factorization in $\PU(2)$ shorter than the $3\log_2\frac{1}{\eps^3}$-factorization first demonstrated in \cite{PS}, precisely the desired outcome. 

\section{Implementation and an example}\label{sec:implementation}
	Visit \url{https://math.berkeley.edu/~zstier/ugthesis/HTLenstra.hs} for a Haskell implementation of code for the algorithm described in \secref{sec:continuous alg}, using many functions implemented in \cite{RS18}, and visit \url{https://math.berkeley.edu/~zstier/ugthesis/HTLenstra.nb} for a partial Mathematica implementation. 
	
	Let us now attempt to approximate
$$g=\frac{1}{3}\begin{pmatrix}1 & 2+2i \\ -2+2i & 1\end{pmatrix}$$
in $d$ to within $\eps=10^{-10}$ (times a small positive constant) using the Haskell implementation. Our heuristics provide an expectation of $T$-count near $\frac{7}{3}\log_210^{30}\approx232.5$. 

The algorithm described in \secref{sec:continuous alg} halts at $k=17$ and returns the approximation
$$\g=\frac{1}{2^9}\begin{pmatrix}-121+145\sqrt{2}+(123-192\sqrt{2})i & 103+78\sqrt{2}-\lpr{211+157\sqrt{2}}i \\ -103-78\sqrt{2}-\lpr{211+157\sqrt{2}}i & -121+145\sqrt{2}-(123-192\sqrt{2})i\end{pmatrix}.$$
This factors as 
\begin{align*}
	\g=&\  THTHTSHTHTSHTHTHTSHTHTSHTSHTSHTSHTSHTSHT \\
		&\ SHTSHTSHTHTSHTSHTSHTSHTSHTSHTSHTSHTSHTHT \\
		&\ SHTSHTSHSSSHH,
\end{align*}
with $T$-count 32---actually shorter than the expectation of $\frac{1}{3}\log_210^{30}\approx33.2$. We readily compute 
\begin{align*}
	\gt_1&\approx\phantom{-}1.477137\\
	\gt_2&\approx-0.421352
\end{align*}
and approximate the corresponding matrices $\gd_1=u(\gt_1)$ and $\gd_2=u(\gt_2)$ as 
\begin{align*}
	\g_1=&\ SHTHTSHTSHTSHTHTSHTHTHTHTHTSHTHTSHTSHTHT \\
		&\  SHTSHTSHTHTHTSHTSHTHTHTSHTHTHTSHTSHTSHTH \\
		&\  THTHTSHTSHTSHTSHTHTSHTHTSHTSHTHTSHTHTSHT \\
		&\  SHTHTSHTHTSHTHTHTSHTSHTSHTHTSHTSHTSHTSHT \\
		&\  HTHTHTHTHTSHTSHTSHTHTHTSHTHTSHTHTSHTSHTS \\
		&\  HTHTSHTHTHTHTSHTSHTSHTHTHTSHTSHTHTHTHTSH \\
		&\  THTHTHTSHTHTSHTSHTHSSSHHSSS,\\
	\g_2=&\ HTSHTHTSHTSHTSHTSHTHTHTSHTHTHTSHTHTHTSHT \\
		&\  HTHTHTHTHTSHTSHTHTSHTSHTHTHTSHTSHTHTHTSH \\
		&\  TSHTSHTSHTHTSHTHTHTSHTSHTHTSHTSHTSHTHTHT \\
		&\  HTSHTSHTHTHTHTSHTHTHTHTHTHTSHTHTSHTSHTSH \\
		&\  THTSHTSHTHTHTSHTHTSHTHTHTHTSHTHTSHTHTSHT \\
		&\  SHTHTSHTSHTHTHTSHTSHTSHTSHTHTHTSHTSHTHTH \\
		&\  THTHTSHTHTSHTSSSHH,
\end{align*}
both of $T$-counts 102, very near to $\log_210^{30}\approx99.6$. In total, this factorization gives $T$-count 236 (fewer than four more than the predicted value of $\frac{7}{3}\log_210^{30}$), and explicitly multiplying out the matrices gives precision with respect to $d$ within $\frac{4}{10^{21}}$, well within the guarantees of \lemref{lem:close approx} with $\nye{\eps}=\frac{1}{10^9}$ and $\eps_0=\frac{5}{6}$ (any $\nye{\eps}>\eps$ and $\eps_0<\frac{2\sqrt{2}}{3}$ suffice), which predicts an approximation to within $\lpr{2+\frac{2+10^{-9}}{\frac{5}{6}}}\frac{1}{10^{10}}\approx4.4\cdot\frac{1}{10^{10}}$. 

\section{Other gate sets}
	Consider the golden gate set of ``V-gates,''\footnote{So named due to the connection with the Roman numeral V.} with their notation preserved from \cite{Sar15}: 
	\begin{align*}
		s_1=\frac{1}{\sqrt{5}}\begin{pmatrix}1+2i \\ & 1-2i\end{pmatrix}, && s_2=\frac{1}{\sqrt{5}}\begin{pmatrix}1 & 2i \\ 2i & 1\end{pmatrix}, && s_3=\frac{1}{\sqrt{5}}\begin{pmatrix}1 & 2 \\ -2 & 1\end{pmatrix},
	\end{align*}
	with $\cS=\lcr{s_1^{\pm1},s_2^{\pm1},s_3^{\pm1}}$ and $\G=\<s_1,s_2,s_3\>$. This case is actually even simpler than that of the Clifford+$T$ gates, requires just a 1-dimensional variant of Lenstra's algorithm, and is described in full, and implemented, in \cite[\S2.2]{Sti}. 
	
	The case of general golden gate sets will be the subject of a future paper. 

\section*{Acknowledgements}
Part of this work appears in my senior thesis at Princeton University. I am grateful to my advisor Peter Sarnak for his support and guidance throughout. I am supported by a National Science Foundation Graduate Research Fellowship, grant number DGE-1752814.

\end{document}